\newcommand{\R}{\ensuremath{\mathbb{R}}}
\newcommand{\Z}{\ensuremath{\mathbb{Z}}}
\newcommand{\izero}{I}
\DeclareMathOperator{\diam}{diam}
\DeclareMathOperator{\Lip}{Lip}
\DeclareMathOperator{\id}{id}
\DeclareMathOperator{\mass}{mass}
\DeclareMathOperator{\vol}{vol}
\DeclareMathOperator{\size}{size}
\DeclareMathOperator{\FV}{FV}
\DeclareMathOperator{\supp}{supp}
\DeclareMathOperator{\interior}{int}
\newtheorem{thm}{Theorem}
\newtheorem{conj}{Conjecture}
\newtheorem{lemma}[thm]{Lemma}
\newtheorem{prop}[thm]{Proposition}
\newtheorem{cor}[thm]{Corollary}
\theoremstyle{remark}
\newtheorem*{remark}{Remark}
\title{High-dimensional fillings in Heisenberg groups}
\author{Robert Young}
\address{University of Toronto\\
  Dept.\ of Mathematics\\
  40 St. George Street, Rm.\ BA6290\\
  Toronto, ON  M5S 2E4\\
  Canada } \date{\today}
\email{ryoung@math.toronto.edu}
\begin{document}
\maketitle
\begin{abstract}
  We use intersections with horizontal manifolds to show that
  high-dimensional cycles in the Heisenberg group can be approximated
  efficiently by simplicial cycles.  This lets us calculate all of the
  higher-order Dehn functions of the Heisenberg groups, thus proving a
  conjecture of Gromov.
\end{abstract}

% TODO: Figure out how to incorporate work of Franchi, Serra-Cassano,
% et al.
% -- Describe the random aspects of the construction

\bibliographystyle{plain}
\section{Introduction}
Nilpotent groups, especially Carnot groups, have scaling properties
that make them particularly useful in geometry.  They appear, for
instance, as tangent cones of sub-riemannian manifolds and in the
horospheres of negatively-curved symmetric spaces.  Their scaling
automorphisms have also been used to solve problems on fillings and
extensions of Lipschitz maps to Carnot groups.  Gromov, for instance,
used scaling techniques and microflexibility to construct extensions
from Lipschitz maps of spheres to Lipschitz maps of discs and bound
the Dehn functions of some nilpotent groups \cite{GroCC}.  In
\cite{YoungFING}, we found techniques for avoiding the use of
microflexibility and generalizing Gromov's Dehn function techniques to
higher-order Dehn functions, and Stefan Wenger and the author used
these techniques to give an elementary proof of Gromov's Lipschitz
extension theorem \cite{YW}.

We can measure the difficulty of extension problems by studying
filling invariants like the higher-order Dehn functions,
$\FV^d$.  In \cite{GroAII}, Gromov made the following conjecture:
\begin{conj}\label{conj:heisenDehn}
  In the $(2n+1)$-dimensional Heisenberg group $H_{n}$,
  \begin{align*}
    \FV^{d+1}(V)\sim V^{\frac{d+1}{d}} & \text{\quad for }1\le d<n\\
    \FV^{d+1}(V)\sim V^{\frac{d+2}{d}}& \text{\quad for }d=n,\\
    \FV^{d+1}(V)\sim V^{\frac{d+2}{d+1}}& \text{\quad for }n<d<2n+1.
  \end{align*}
\end{conj}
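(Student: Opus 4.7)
I would prove matching upper and lower bounds separately in each of the three dimensional regimes, exploiting the parabolic scaling automorphisms $\delta_s$ of $H_{2n+1}$ (dilating horizontal directions by $s$ and the central direction by $s^2$) together with the fact that the Carnot--Carath\'eodory Hausdorff dimension is $2n+2$. The upper bounds will come from a general scheme that approximates a Lipschitz cycle by a simplicial one in a scaled integer lattice and fills it combinatorially; the lower bounds will come from the intersection-with-horizontal-manifolds technique highlighted in the abstract.

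\textbf{Upper bounds via simplicial approximation.}
Given a Lipschitz $(d+1)$-cycle $\alpha$ of mass $V$, the plan is to rescale it to unit mass using $\delta_s$, approximate by a simplicial $(d+1)$-cycle $\alpha'$ in the integer lattice, and fill $\alpha'$ by a cellular chain $\beta'$ whose combinatorial mass admits a Euclidean-type bound. For $d < n$ both the approximation and the filling can be taken ``horizontal'' (transverse to the central direction), because low-dimensional horizontal cycles are unobstructed; rescaling back and optimizing over $s$ will produce the Euclidean exponent $V^{(d+1)/d}$. For $d > n$ the filling must use cells in the central direction, which costs extra combinatorial mass but saves mass under rescaling since the central direction dilates as $s^2$, so optimization yields $V^{(d+2)/(d+1)}$. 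At the critical $d = n$, horizontal fillings fail in general because of the contact structure on $H_{2n+1}$, so the scheme is forced to pay both the horizontal cost of the cycle and the central cost of the filling, producing the anomalous exponent $V^{(d+2)/d}$.

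\textbf{Lower bounds via horizontal intersections, and the main obstacle.}
For matching lower bounds, I would take test cycles obtained by $\delta_s$-scaling a fixed model cycle and estimate their filling volumes from below by pairing a putative filling chain $\beta$ against Poincar\'e-dual forms of horizontal submanifolds $M$ of complementary dimension. Because $M$ is horizontal, its dual form can be chosen with bounded comass in the Carnot--Carath\'eodory metric, giving $\mass(\beta) \gtrsim \#(\partial\beta \cap M)$; choosing model cycles with many transverse horizontal intersections then forces the desired lower bound in each regime. The hardest step will be the critical dimension $d = n$: one must show that the ``vertical defect'' of the best simplicial approximation is truly unavoidable, and that it can neither be suppressed at low cost nor absorbed into a purely horizontal filling, so that the filling volume really scales as $V^{(d+2)/d}$ rather than $V^{(d+1)/d}$. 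The subcritical and supercritical regimes should then follow from easier versions of the same approximation and intersection framework.
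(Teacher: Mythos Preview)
Your proposal has the central technique of the paper inverted. Intersections with horizontal manifolds are used here for the \emph{upper} bound, not the lower: when $d>n$, the dual cells $\sigma^*$ of a triangulation have dimension $2n+1-d\le n$ and can therefore be realized by horizontal maps. The paper defines the approximation $P_\tau(\alpha)=\sum_\sigma i(\alpha,\widehat{\sigma^*})\,\hat\sigma$ by counting intersection numbers of $\alpha$ with these horizontal dual cells; because horizontal $(2n{+}1{-}d)$-cells have mass $\sim t^{\,2n+1-d}$ under $s_t$ while the fundamental domain has volume $t^{\,2n+2}$, an averaging argument (Lemma~\ref{lem:volBound}) yields $\|P_{s_t(\tau)}(\alpha)\|_1 \lesssim t^{-(d+1)}V$ rather than the $t^{-d}V$ that ordinary Federer--Fleming delivers. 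The lower bounds are not proved here at all; they are cited from Burillo.

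Your single-scale scheme --- rescale by $s$, approximate via Federer--Fleming, fill combinatorially, rescale back, optimize over $s$ --- is essentially the method of \cite{YoungFING}, and the introduction to the present paper explains why it fails for $d>n$: Federer--Fleming produces an approximation with $\lesssim s^{-d}V$ cells at scale $s$, but each $d$-cell has volume $\sim s^{d+1}$ (since $d$-cells cannot be made horizontal once $d>n$), so the approximation already has mass $\sim sV$, which blows up. More seriously, your step ``fill $\alpha'$ by a cellular chain $\beta'$ whose combinatorial mass admits a Euclidean-type bound'' is circular: that combinatorial filling bound \emph{is} the Dehn function you are trying to compute. The paper avoids both problems by never filling at a fixed scale; instead it builds a telescoping filling out of chains $P_{\tau'_i}(\alpha\times[2^{i-1},2^i])$ interpolating between the approximations at consecutive dyadic scales, bounds each layer via the horizontal-dual-cell intersection count, and sums over scales $1,2,\dots,2^I$ with $2^I\sim V^{1/(d+1)}$ to obtain the exponent $(d+2)/(d+1)$.

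Finally, you flag $d=n$ as the hardest case, but both $d<n$ and $d=n$ were already settled in \cite{YoungFING} by the simpler scheme; the new contribution of this paper is precisely the regime $n<d<2n+1$, and it is there that the dual-cell idea is indispensable.
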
 

In \cite{YoungFING}, we proved the first two bounds.  The main idea of
the proof was to construct a sequence $\{P_i(\alpha)\}$ of
approximations of a Lipschitz cycle $\alpha$.  Each $P_i(\alpha)$ was
an approximation of $\alpha$ by simplices of diameter $\sim 2^i$, and
for each $i$, we constructed a chain $R_i(\alpha)$ such that
$$\partial R_i(\alpha)=P_{i+1}(\alpha)-P_{i}(\alpha).$$
The sum of the $R_i$'s is then a filling of (an approximation of) the
original cycle, and bounds on the mass of the $R_i$'s lead to bounds
on the filling volume of $\alpha$.

We constructed the $P_i$'s and $R_i$'s using horizontal triangulations
and scaling automorphisms.  If $G$ is a Carnot group which admits a
triangulation whose low-dimensional simplices are horizontal (i.e.,
tangent to a certain left-invariant distribution of tangent planes),
we can use the Federer-Fleming deformation theorem \cite{FedFlem} to
approximate $\alpha$ by a simplicial cycle in that triangulation.
Rescaling the triangulation produces a family of approximations of
$\alpha$, and since the simplices in the triangulation are horizontal,
we can bound how quickly their scalings grow, leading to euclidean
bounds on the filling functions of $G$.

If $G=H_{n}$, there are many horizontal maps of $k$-simplices into
$G$ for $k\le n$, and we can use these maps to construct horizontal
triangulations, providing the first two bounds in
Conj.~\ref{conj:heisenDehn}.  On the other hand, for $k>n$, there are
no horizontal triangulations, so using the same approximation methods
as before gives poor results; if $\dim \alpha>n$, then $P_i(\alpha)$
may have much larger mass than $\alpha$, leading to poor bounds on the
high-dimensional filling functions of $G$.  In this paper, we will
provide efficient methods for approximating high-dimensional cycles in
$H_{n}$ and use them to bound the filling functions of $H_{n}$.  We will show:

\begin{thm}\label{thm:DehnThm}
  In the $(2n+1)$-dimensional Heisenberg group $H_{n}$,
  $$\FV^{d+1}(V)\sim V^{\frac{d+2}{d+1}} \text{\quad for }n<d<2n+1.$$
\end{thm}

The basic idea of our construction is to approximate a cycle $\alpha$
by counting its intersections with a dual skeleton of a triangulation.
If $\alpha$ has dimension $k>n$, cells in the approximation come from
intersections between $\alpha$ and $(2n+1-k)$-dimensional cells of the
dual skeleton.  Since $2n+1-k\le n$, we can make these cells
horizontal, thus limiting the number of possible intersections.  The
result is an approximation $P_\tau(\alpha)$ such that $\mass
\alpha\sim \mass P_\tau(\alpha)$, which can be used in constructing a
filling of $\alpha$.

In Section~\ref{sec:prelims}, we will give some background necessary
in the rest of the paper; in Section~\ref{sec:approx}, we will
construct the approximations we will need; and in
Section~\ref{sec:mainthm}, we will use these approximations to
construct fillings and bound the filling invariants of $H_{n}$.

Parts of this paper were written at the Institut des Hautes {\'E}tudes
Scientifiques and at New York University.  The author was partially
supported by a Discovery Grant from the Natural Sciences and
Engineering Research Council of Canada.

\section{Preliminaries}\label{sec:prelims}
\subsection{Filling functions}
The basic idea of a filling function is to measure the difficulty of
filling a boundary of a given size.  There are several ways to make
this rigorous, depending on the type of boundary and the type of
filling.  We will use a definition based on Lipschitz chains, as
in \cite{GroFRM}.  Definitions of other higher-order
filling invariants can be found in \cite{AlWaPr, ECHLPT, GroftA,
  GroftB}.  An integral Lipschitz $d$-chain in a space $X$
is a finite linear combination, with integer coefficients, of
Lipschitz maps from the euclidean $d$-dimensional simplex $\Delta^d$
to $X$.  We will often call this simply a Lipschitz $d$-chain.

We will generally take $X$ to be a riemannian manifold or simplicial
complex.  In this case, Rademacher's Theorem implies that a Lipschitz
map to $X$ is differentiable almost everywhere, and one can define the
volume of a Lipschitz map as the integral of the magnitude of its
jacobian.  If $a$ is a Lipschitz $d$-chain and $a=\sum_i x_i \alpha_i$
for some maps $\alpha_i:\Delta^d\to X$ and some coefficients $x_i\in
\Z, x_i\ne 0$, we define
$$\|a\|_1:=\sum_i |x_i|.$$
When $d>0$, we define the {\em mass} of $a$ to be
$$\mass{a}:=\sum_i |x_i| \vol_d \alpha_i,$$
and if $d=0$, we let $\mass{a}:=\|a\|_1$.  
We define the {\em support} of $a$ to be
$$\supp a:=\bigcup_i \alpha_i(\Delta^d).$$
The Lipschitz chains form a chain complex, which we denote
$C^{\text{lip}}_*(X)$.  If $g:X\to Y$ is a Lipschitz map, we can
define $g_\sharp$, the pushforward map, to be the linear map which
sends the simplex $\alpha:\Delta^d\to X$ to the simplex $g\circ \alpha$; this is
a map of chain complexes.  Likewise, we can define a smooth chain to be a
sum of smooth maps of simplices and denote the complex of smooth
chains by $C^{\text{sm}}_*(X)$.

If $X$ is a $d$-connected riemannian manifold and $a$ is
an integral Lipschitz $d$-cycle in $X$, we define the filling volume
of $a$ to be:
$$\FV^{d+1}_X(a):=\inf_{\partial b=a}
\mass{b},$$ where the infimum is taken over the set of $b\in
C^{\text{lip}}_{d+1}(X)$ such that $\partial b=a$.  We get the
$(d+1)$-dimensional filling volume function by taking a supremum over
all cycles of a given volume:
$$\FV^{d+1}_X(V):=\sup_{\mass a \le V} \FV^{d+1}_X(a),$$
where the supremum is taken over integral Lipschitz $d$-cycles.  

A related definition is the $d$-th order Dehn function, $\delta^d$,
which measures the volume necessary to extend a map $S^d\to X$ to a
map $D^{d+1}\to X$ (the fact that $\delta^d$ corresponds most closely
to $\FV^{d+1}$ is unfortunate but conventional).  If $X$ is a
$d$-connected manifold or simplicial complex and $f:S^d\to X$ is a Lipschitz map, we define
$$\delta^d_X(f):=\mathop{\inf_{g:D^{d+1}\to X}}_{g|_{S^d}=f} \vol_d{g}$$
and
$$\delta^d_X(V):=\mathop{\sup_{f:S^{d}\to X}}_{\vol_d f\le V} \delta^d_X(f).$$

We have defined $\delta^d$ and $\FV^{d+1}$ as invariants of spaces,
but they can in fact be defined as invariants of groups.  If $G$ is a
group which acts geometrically (that is,
cocompactly, properly discontinuously, and by isometries) on a
$d$-connected manifold or simplicial complex $X$, then the asymptotic
growth rates of $\delta^d_X(V)$ and $\FV^{d+1}_X(V)$ are invariants of
$G$.  To make this rigorous, we can define a partial ordering
on functions $\R^+\to \R^+$ so that $f\precsim g$ if and only if there
is a $c$ such that
$$f(x)\le c g(c x + c) + cx + c.$$
We say $f\sim g$ if and only if $f\precsim g$ and $g \precsim f$.
Then if
$X_1$ and $X_2$ are two $d$-connected manifolds or simplicial complexes on which $G$ acts geometrically, then
$$\delta^d_{X_1}(V)\sim \delta^d_{X_2}(V)$$
and
$$\FV^d_{X_1}(V)\sim \FV^d_{X_2}(V).$$
This is proved for a simplicial version of $\delta^d$ in
\cite{AlWaPr}, but the proof there also applies to a simplicial
version of $\FV^d$; one can show that the Lipschitz versions used here
are equivalent to simplicial versions using the Deformation
Theorem.

The relationship between $\delta^d$ and $\FV^{d+1}$ depends on $d$.
When $d\ge 3$, then $\delta^d_X\sim \FV^{d+1}_X$ for all $d$-connected
manifolds or simplicial complexes $X$.  When $d=2$, then
$\delta^d_X\lesssim \FV^{d+1}_X$.  (See \cite[App. 2.(A')]{GroFRM} for
the upper bound on $\delta^d$ and \cite[Rem. 2.6.(4)]{BBFS} for the
lower bound; see also \cite{GroftA, GroftB}.)  Since this paper focuses
on upper bounds on $\FV^{d+1}$ when $d\ge 2$, the bounds in this paper
will also hold for $\delta^d$.

\subsection{Triangulations}
A {\em triangulation} of a space $X$ consists of a simplicial complex
$\tau$ and a homeomorphism $\phi:\tau\to X$.  We can put a metric and
a smooth structure on $\tau$ so that each simplex is isometric to the
standard euclidean simplex; we say that a map $\tau\to X$ is {\em
  piecewise smooth} if it is smooth on each simplex.  When the meaning
is clear, we will sometimes refer to the triangulation $(\tau,\phi)$
as $\tau$.  We will require throughout this paper that $\phi$ is
locally Lipschitz.  If $S\subset X$ is such that $\phi^{-1}(S)$ is a
subcomplex $\tau_0$ of $\tau$, then we say that the triangulation
$(\tau,\phi)$ \emph{restricts to} $(\tau_0,\phi|_{\tau_0})$ on $S$.

If $(\tau,\phi)$ is a triangulation of $X$, then we can define the
simplicial chains of $X$ to be the push-forwards of simplicial chains
of $\tau$.  If $\phi:\tau\to X$ is piecewise smooth or Lipschitz, then
the simplicial chains of $X$ are a subcomplex of the smooth or
Lipschitz chains.  By abuse of notation, we will denote the chain
complex of simplicial chains of $X$ by $C_*(\tau)$.

If $G$ acts on $X$, then $G$ acts on the set of triangulations of $X$.
Let $\rho_g:X\to X$ be the map $\rho_g(x)=gx$.  If $(\tau,
\phi:\tau\to G)$ is a triangulation of $G$ and $g\in X$, we let
$g\cdot \tau$ be the triangulation $(\tau, \rho_g\cdot \phi)$.  Furthermore, if $\Gamma$ is a lattice in $G$, we say that
$\tau$ is {\em $\Gamma$-adapted} if $g\cdot \tau=\tau$ for every $g\in
\Gamma$.  That is, $\Gamma$ acts on $\tau$ and the map $\phi$ is
equivariant with respect to this action.  If $\tau$ is
$\Gamma$-adapted and $\sigma$ is a simplex of $\tau$, we will write
$\Gamma\cdot\sigma$ to denote the corresponding $\Gamma$-orbit of
simplices.  We will write $\Gamma \backslash \tau$ to denote the set
of orbits of simplices.

\subsection{Carnot groups}
This paper focuses on Carnot groups, which are nilpotent Lie groups
provided with a family of scaling automorphisms.  Recall that if $G$
is a simply-connected nilpotent Lie group and $\mathfrak{g}$ is its
Lie algebra, then the lower central series
$$\mathfrak{g}=\mathfrak{g}_0\supset \dots \supset
\mathfrak{g}_{k-1}=\{0\},$$
$$\mathfrak{g}_{i+1}=[\mathfrak{g}_i,\mathfrak{g}]$$
terminates.  If $\mathfrak{g}_k=\{0\}$ and $\mathfrak{g}_{k-1}\ne \{0\}$,
we say that $\mathfrak{g}$ has nilpotency class $k$.  If there is a
decomposition
$$\mathfrak{g}=V_1\oplus \dots \oplus V_k$$
such that 
$$\mathfrak{g}_i=V_{i+1}\oplus \dots \oplus V_k$$
and $[V_i,V_j]\subset V_{i+j}$ for all $i,j\le k$, we call it a
grading of $\mathfrak{g}$.  If $\mathfrak{g}$ has a grading, we can extend the
$V_i$ to left-invariant distributions on $G$ and give $G$ a
left-invariant metric such that the $V_i$'s are orthogonal.  With this
metric, $G$ is called a Carnot group.

If $G$ is a Carnot group, there is a family of automorphisms $s_t:G\to
G$ which act on the Lie algebra by $s_t(v)=t^i v$ for all $v\in V_i$.
These automorphisms distort vectors in $\mathfrak{g}$ by differing
amounts.  Vectors in $V_1$ are distorted the least, and we call these
vectors {\em horizontal}.  If $M$ is a manifold and $f:M\to
G$ is a piecewise smooth map, we say that $f$ is {\em horizontal} if
the tanget planes to $f$ all lie in the distribution $V_1$.  

\section{Approximating cycles in Carnot groups}\label{sec:approx}

It is a theorem of Federer and Fleming \cite[??]{FedFlem}
that any Lipschitz current in $\R^n$ can be approximated by a sum of
cubes in a grid in $\R^n$ such that the mass of the sum of cubes is no
more than a constant factor larger than the mass of the original
cycle; furthermore, this constant factor is independent of the
side-length of the cubes involved.  A consequence of this
\cite[??]{FedFlem} is that the filling functions of $\R^n$ satisfy
$$\FV^k_{\R^n}(V)\precsim V^{\frac{k}{k-1}}.$$

In \cite{YoungFING}, we generalized these results to chains of low
dimension in Carnot groups with horizontal triangulations (for
example, the Heisenberg groups).  In this section, we will develop
tools for constructing approximations of chains of low
\emph{co}dimension in such groups.  

If $a$ is a smooth chain in $G$ and $\tau$ is a
$\Gamma$-adapted triangulation of $G$, we can construct a dual skeleton
$\tau^*$ of $\tau$.  If $\sigma$ is a cell of $\tau$ and $\sigma^*$ is
its dual, we let $[\sigma]$ and $[\sigma^*]$ be the corresponding
simplicial chains.  For all
$g\in G$ except a measure-zero subset, we can define the intersection
number $i(a,g\cdot [\sigma^*])$ between a translation of $\sigma*$
and $a$.  Therefore, for all but a measure-zero subset of $G$, we
can define an approximation 
$$P_{g\cdot \tau}(a):=\sum_{\sigma\in \tau^{(d)}}
i(a,g\cdot [\sigma^*]) g\cdot [\sigma] \in C_{d+1}(g\cdot
\tau).$$ For any given $g$, the number of intersections between
$a$ and $g\cdot \sigma^*$ might be much larger than the mass of
$a$, but the expected number of intersections is on the order of
the mass of $a$.

For the applications in the next section, we will need a slightly more
complicated definition.  If $f:\tau\to G$ is a $\Gamma$-equivariant
Lipschitz map (not necessarily a homeomorphism) and $\sigma$ is a cell
of $\tau$, we think of the images $f(\sigma)$ and $f(\sigma^*)$ as
cells in a ``folded triangulation'' of $G$.  We can use the
methods above to approximate $a$ as a sum of ``folded cells''
$$P_{g\cdot f(\tau)}(a):=\sum_{\sigma\in \tau^{(d)}}
i(a,g\cdot f_\sharp([\sigma^*])) g\cdot f_\sharp(\sigma).$$
As with the previous construction, we can bound the expected mass of
$P_{g\cdot f(\tau)}(a)$ in terms of $f$ and $a$.  

Define the $d$-size of $f$ to be the total mass of the images of the $d$-cells of
$\sigma^*$:
$$\size_d(f)=\sum_{\Gamma\cdot{\sigma}\in  \Gamma \backslash B(\tau)^{(d)}} \mass
  f_\sharp(\sigma^*).$$
In this section, we will expand on the above constructions and prove
the following properties:
\begin{prop}\label{prop:PandQ}
  Let $0\le d\le n$.  Let $Z\subset G$ be a compact fundamental domain
  for the right action of $\Gamma$ on $G$, so that $Z\Gamma=G$.  If
  $a$ is a smooth $d$-chain in $G$, then for all but a
  measure-zero set of $g\in G$, there is a $d$-chain
  $$P_{g\cdot f(\tau)}(a)\in g\cdot f_\sharp (C_d(\tau))$$
  such that
  $$\partial P_{g\cdot f(\tau)}(a)=P_{g\cdot f(\tau)}(\partial a).$$
  There is a $c_P>0$ depending only on $d$ and $G$ such that
  \begin{equation}
    \int_Z \|P_{g\cdot f(\tau)}(a)\|_1 \; dg \le c_P
    \mass{a} \size_d(f).\label{eq:Pbound}
  \end{equation}

  If $a$ is a cycle, there is a chain $Q_{g\cdot
    f(\tau)}(a)\in C_{d+1}^\text{lip}$ such that
  $$\partial Q_{g\cdot f(\tau)}(a)=a-P_{g\cdot
    f(\tau)}(a).$$
  If $a\in C_d(\tau)$, there is a $c_Q>0$ depending on $f$,
  $\tau$, $d$, and $G$ such that
  \begin{equation}
    \int_Z \mass Q_{g\cdot f(\tau)}(a) \; dg\le  c_Q \mass a.\label{eq:Qbound}
  \end{equation}
\end{prop}

To construct $P$ and $Q$, we first define the dual complex $\tau^*$.
We will identify the cells of $\tau^*$ with subcomplexes of the
barycentric subdivision $B(\tau)$ of $\tau$.  Recall that $d$-cells of
$B(\tau)$ can be identified with flags $\sigma_0\subset \dots \subset
\sigma_d$ where the $\sigma_i$ are simplices of $\tau$.  If $\sigma$
is a $d$-simplex of $\tau$, let $\sigma^*$ be the subcomplex of
$B(\tau)$ consisting of flags $\sigma_0\subset \dots \subset
\sigma_{d'}$ where $\sigma \subset \sigma_i$ for all $i$.  This is
homeomorphic to a disc of dimension $n-d$, and these discs partition
$B(\tau)$ into a CW-complex isomorphic to $\tau^*$, so we can view
$\tau$ and $\tau^*$ as two different ways of decomposing $B(\tau)$.

If $\alpha:\tau_1\to G$ and $\beta:\tau_2\to G$ are piecewise smooth
maps from two simplicial complexes, we say that they are {\em
  transverse} if for all simplices $\Delta_1\in \tau_1$ and
$\Delta_2\in \tau_2$, the restrictions $\alpha|_{\interior \Delta_1}$
and $\beta|_{\interior \Delta_2}$ are transverse.  (In particular, if
the domains of $\alpha$ and $\beta$ are simplices, then the
restrictions of $\alpha$ and $\beta$ to any face are transverse.)  If
$a$ and $b$ are smooth chains, we say they are transverse if $a=\sum
x_i \alpha_i$ and $b=\sum y_j \beta_j$ for some maps $\alpha_i$ and
$\beta_i$ such that $\alpha_i$ is transverse to $\beta_j$ for all $i$
and $j$.

If $\alpha:\Delta^d\to G$ and
$\beta:\Delta^{d'}\to G$ are transverse, where $\Delta^d$ and
$\Delta^{d'}$ are the standard simplices of dimension $d$ and $d'$ and
$d+d'=n$, we can define $i(\alpha,\beta)$ to be the intersection
number of $\alpha$ and $\beta$.  We can extend $i$ bilinearly to define $i(a,b)$ when $a$ and
$b$ are transverse smooth chains.  Since the intersection number is
invariant under small perturbations, we can also define $i(a,b)$ for
some non-transverse, non-smooth chains; it suffices that $a$ and $b$
are Lipschitz chains such that $\supp
\partial a \cap \supp b=\emptyset$ and $\supp a \cap \supp \partial
b=\emptyset$.

\begin{remark}\label{rem:genericIntersection}
  This intersection number is defined for ``generic'' $a$ and $b$.
  For instance, if $a$ and $b$ are Lipschitz chains of complementary
  dimension, then $\supp
  \partial(g\cdot  a) \cap \supp b=\emptyset$ and $\supp g\cdot a \cap \supp \partial
  b=\emptyset$ for all but a measure zero set of $g\in G$, so
  $i(g\cdot a,b)$ is defined for all but a measure zero set of $g\in G$.
\end{remark}

For every simplex $\sigma$ of $\tau$, let
$[\sigma]$ and $[\sigma^*]$ be the fundamental classes of $\sigma$ and
$\sigma^*$.  Then $\phi_\sharp([\sigma])$ and
$\phi_\sharp([\sigma^*])$ are Lipschitz chains in $G$ and we can
require that they be oriented so that their intersection number is 1.  By abuse of
notation, we will identify chains and cycles in $\tau$ with their
images in $G$ when it is clear, and simply write
$i([\sigma],[\sigma^*])$.

Let $a$ be a Lipschitz $d$-chain in $G$.  If $i(a,[\sigma^*])$
is defined for all cells $\sigma$ in $\tau^{(d)}$, define
$$P_{\tau}(a):=\sum_{\sigma\in \tau^{(d)}}
i(a,[\sigma^*]) \sigma \in C_{d+1}(\tau),$$ where $\tau^{(d)}$
is the $d$-skeleton of $\tau$, considered as the set of $d$-cells.
This is a simplicial $d$-chain which approximates $a$, and standard
arguments show that
$$\partial P_{\tau}(a)=P_{\tau}(\partial a).$$
Furthermore, if $a$ is a simplicial chain, then $P_{\tau}(a)=a$.

More generally, if $i(a,f_{\sharp}([\sigma^*]))$ is defined for all
cells $\sigma$ in $\tau$, define
$$P_{f(\tau)}(a):=\sum_{\sigma\in \tau^{(d)}}
i(a,f_\sharp([\sigma^*])) f_\sharp(\sigma)\in C^{\text{sm}}_{d}(G).$$
As above, 
$$\partial P_{f(\tau)}(a)=P_{f(\tau)}(\partial a),$$
so if $a$ is a cycle, so is $P_{f(\tau)}(a)$.  Define
$$P_{g\cdot f(\tau)}(a):=P_{(\rho_g\circ f)(\tau)}$$
where $\rho_g:X\to X$ is the map $\rho_g(x)=gx$.
Note that if $P_{f(\tau)}(g^{-1}\cdot a)$ is
defined, then 
$$P_{g\cdot f(\tau)}(a)=g\cdot P_{f(\tau)}(g^{-1}\cdot a).$$

We claim that this satisfies the properties in Prop.\
\ref{prop:PandQ}.  We have already seen that it is defined for all but
a measure-zero set of $g\in G$, and it is clear from the
definition that it is an element of $g\cdot f_\sharp
(C_d(\tau))$.
It remains to bound its mass.

Our arguments are based on the following lemma:
\begin{lemma}\label{lem:intersect}
  There is a $c_\cap>0$ depending on $G$ such that if
  $m+n=\dim G$, $a$ is an $m$-dimensional smooth chain, and $b$ is an
  $n$-dimensional smooth chain, then
  $$\int_G |i(g\cdot a, b)|\; dg \le c_\cap \mass a\mass b.$$
\end{lemma}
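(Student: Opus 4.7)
The plan is to adapt the classical kinematic (Crofton-type) argument for $\R^n$ to $G$, exploiting the left-invariance of the Riemannian metric together with the fact that nilpotent Lie groups are unimodular. By linearity of intersection numbers and the triangle inequality for absolute values, it suffices to treat the case where $a=\alpha\colon\Delta^m\to G$ and $b=\beta\colon\Delta^n\to G$ are single smooth simplices with coefficient $1$; summing over simplices will then produce the $\mass a\cdot\mass b$ on the right. For such simplices,
$$|i(g\cdot\alpha,\beta)|\le\#\bigl\{(x,y)\in\Delta^m\times\Delta^n:g\alpha(x)=\beta(y)\bigr\}=\#\Phi^{-1}(g),$$
where $\Phi\colon\Delta^m\times\Delta^n\to G$ is the smooth map $\Phi(x,y):=\beta(y)\alpha(x)^{-1}$, defined between manifolds of the same dimension $m+n=\dim G$.

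Next I apply the area formula to $\Phi$, which gives
$$\int_G\#\Phi^{-1}(g)\,dg \;=\; \int_{\Delta^m\times\Delta^n}|J\Phi(x,y)|\,dx\,dy,$$
up to a constant comparing Haar measure with Riemannian volume that will be absorbed into $c_\cap$. The whole lemma therefore reduces to the pointwise Jacobian bound $|J\Phi(x,y)|\le |J\alpha(x)|\cdot|J\beta(y)|$, for then Fubini yields $\int_G\#\Phi^{-1}(g)\,dg\le\mass\alpha\cdot\mass\beta$.

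To establish the Jacobian bound I trivialize $TG$ by left translation. Applying the standard derivative formulas for group multiplication and inversion to $\Phi=L_{\beta(y)}\circ\iota\circ\alpha$ for fixed $y$ and to $\Phi=R_{\alpha(x)^{-1}}\circ\beta$ for fixed $x$ identifies the derivative $d\Phi_{(x,y)}\colon T_x\Delta^m\oplus T_y\Delta^n\to T_{\Phi(x,y)}G\cong\mathfrak g$ with
$$(v,w)\longmapsto\Ad(\alpha(x))\Bigl(L_{\beta(y)^{-1}}D\beta(y)\,w\;-\;L_{\alpha(x)^{-1}}D\alpha(x)\,v\Bigr),$$
and hence
$$|J\Phi(x,y)| = |\det\Ad(\alpha(x))|\cdot\bigl|\det\bigl(L_{\alpha(x)^{-1}}D\alpha(x)\,\big|\,L_{\beta(y)^{-1}}D\beta(y)\bigr)\bigr|.$$
Because $G$ is nilpotent it is unimodular, so $|\det\Ad(g)|=1$ for every $g\in G$ and the first factor drops out. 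The remaining $(m+n)\times(m+n)$ determinant is bounded by Hadamard's inequality by the product of the $m$-volume and the $n$-volume of its two blocks of columns, and since left translation is an isometry these two volumes are exactly $|J\alpha(x)|$ and $|J\beta(y)|$.

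The main obstacle is the Lie-group bookkeeping in the third paragraph: one has to keep careful track of how $dL$, $dR$, $d\iota$, and $\Ad$ combine when the source and target tangent spaces of $\Phi$ are both trivialized by left translation. Once the linear algebra is arranged correctly, the role of nilpotency is concentrated in the single identity $|\det\Ad|\equiv 1$, and Hadamard's inequality together with the area formula finishes the job.
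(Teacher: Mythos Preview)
Your argument is correct and in fact sharper than the paper's, yielding $c_\cap=1$ once Haar measure is normalized to agree with the left-invariant Riemannian volume. The paper takes a coarser route: it first barycentrically subdivides until every simplex has diameter at most $1$, then uses unimodularity only to translate $a$ and $b$ into the unit ball, and finally bounds $\vol\Phi$ by factoring $\Phi$ through $(\alpha,\beta):\Delta^m\times\Delta^n\to G\times G$ and the map $p(h,k)=kh^{-1}$, picking up the constant $c_\cap=(\Lip p|_{B_2})^{\dim G}$. Your approach replaces this two-step reduction with a single pointwise Jacobian estimate: the identity $|\det\Ad|\equiv 1$ isolates exactly where nilpotency enters, and the block inequality $|\det[A\mid B]|\le\sqrt{\det A^{\!T}\!A}\,\sqrt{\det B^{\!T}\!B}$ (Fischer's inequality, which you call Hadamard) then gives $|J\Phi|\le|J\alpha|\,|J\beta|$ with no loss. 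The paper's version is more elementary in that it avoids the Lie-group differential calculus, but yours is cleaner, gives the optimal constant, and makes the role of unimodularity transparent.
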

\begin{proof}
  We can consider the case that $a$ and $b$ both consist of single
  smooth simplices of diameter $\le 1$.  Any other chain can be
  barycentrically subdivided until all of its simplices have diameter
  $\le 1$, so the general case follows by linearity.  

  First, because $G$ is unimodular, we have
  $$  \int_G |i(g\cdot a, b)|\; dg = \int_G |i(g h\cdot a, h'\cdot b)|\; dg$$
%   \begin{align*}
%     \int_G |i(g\cdot a, b)|\; dg & = \int_G |i(h^{-1}g h\cdot a, b)|\; dg \\
% & = \int_G |i(gh\cdot a, h\cdot b)|\; dg\\
% & = \int_G |i(g'h'\cdot a, h\cdot b)|\; dg',
%   \end{align*}
  for any $h,h'\in G$, so, after translating $a$ and $b$, we may
  assume that $a$ and $b$ are contained in a ball of radius $1$ around
  $0\in G$.

  Let $\Delta^m$ and $\Delta^n$ be the standard euclidean simplices of
  dimension $m$ and $n$, and let $\alpha:\Delta^m\to G$ and
  $\beta:\Delta^n\to G$ be the maps corresponding to $a$ and $b$
  respectively.  Let $\gamma:\Delta^m\times \Delta^n\to G$ be the map
  $(x,y)\mapsto \beta(y)\alpha(x)^{-1}$.  If $g\cdot \alpha$ and $\beta$ are
  transverse, then $|i(g\cdot a,b)|\le
  \#\gamma^{-1}(g)$, and by the coarea formula, we have
  $$\int_G \#\gamma^{-1}(g)\; dg\le \vol \gamma.$$
  We can write $\gamma$ as the
  composition of maps $\gamma_0:\Delta^m\times \Delta^n\to G\times
  G$ and $p:G\times  G\to G$, where $\gamma_0(x,y)=(\alpha(x),\beta(y))$ and $p(h,k)=kh^{-1}$.  The image of $\gamma_0$ is contained in a ball of radius $2$; if we
  denote this ball by $B$, we have
  $$\vol \gamma\le (\vol \gamma_0)(\Lip p|_{B})^{\dim G}=(\vol
  \alpha)(\vol \beta)(\Lip p|_{B})^{\dim G},$$ where $\Lip p|_{B}$ is
  the Lipschitz constant of $p|_{B}$.  Thus the lemma holds for
  $c_\cap:=(\Lip p|_{B})^{\dim G}$.  
\end{proof}

Equation \eqref{eq:Pbound} follows easily:
\begin{lemma}\label{lem:volBound}
  Let $Z\subset G$ be a compact fundamental domain for the right
  action of $\Gamma$ on $G$.  Let $d\in \Z$. If $c_\cap$ is as in
  Lemma~\ref{lem:intersect}, then for all smooth $d$-cycles $\alpha$,
  $$\int_Z \|P_{g\cdot f(\tau)}(\alpha)\|_1 \; dg \le c_\cap \mass{\alpha} \sum_{\Gamma\cdot{\sigma}\in  \Gamma \backslash \tau^{(d)}}
  \mass f_\sharp([\sigma^*]).$$
  Consequently, there is an $g\in G$ such that
  $$\|P_{g\cdot f(\tau)}(\alpha)\|_1 \le c_\cap \frac{\mass{\alpha}}{\vol
    Z}\sum_{\Gamma\cdot{\sigma} \in \Gamma
    \backslash \tau^{(d)}}   \mass f_\sharp([\sigma^*]).$$
\end{lemma}
\begin{proof}
  Using Lemma~\ref{lem:intersect}, we see that
  \begin{align*}
    \int_Z \|P_{g\cdot f(\tau)}(\alpha)\|_1 \; dg & = \int_Z \sum_{\sigma\in
      \tau^{(d)}} |i(\alpha,g\cdot f_\sharp([\sigma^*]))| \; dg\\
    & \le \sum_{\Gamma\cdot{\sigma}\in  \Gamma \backslash \tau^{(d)}}
    \int_G |i(\alpha,g\cdot f_\sharp([\sigma^*]))| \; dg \\
    & \le  c_\cap \mass{\alpha} \sum_{\Gamma\cdot{\sigma}\in  \Gamma \backslash \tau^{(d)}}
    \mass f_\sharp([\sigma^*]),
  \end{align*}
  as desired.
\end{proof}

To finish the proof of Prop.\ \ref{prop:PandQ}, we need to construct
$Q_{g\cdot f(\tau)}(\alpha)$.  
\begin{lemma}\label{lemma:Q}
  For every smooth $d$-cycle $\alpha$ and for almost every $g\in G$, 
  there is a smooth $(d+1)$-chain $Q_{g\cdot f(\tau)}(\alpha)$ such that
  $$\partial Q_{g\cdot f(\tau)}(\alpha)=\alpha-P_{g\cdot f(\tau)}(\alpha).$$

  If $Z$ is a fundamental domain for the right action of $\Gamma,$
  then there is a $c_Q>0$ independent of $\alpha$ such that for all
  simplicial $d$-cycles $\alpha$,
  $$\int_{Z} \mass Q_{g\cdot f(\tau)}(\alpha) \; dg\le  c_Q \mass \alpha .$$
\end{lemma}

Any Lipschitz cycle can be approximated by a simplicial cycle, so the
requirement that $\alpha$ is simplicial is only a minor limitation.
The main idea of the proof is that if $\sigma$ is a simplex in
$\alpha$, then $P_{g\cdot f(\tau)}(\sigma)$ is only a bounded distance
from $\sigma$.  This lets us construct a chain homotopy between
$P_{g\cdot f(\tau)}$ and the identity.

\begin{proof}
  Since $G$ is a 1-connected nilpotent Lie group, it is 
  contractible.  One contraction is given by the scaling
  automorphisms; let $h:G\times [0,1]\to G$ be the map
  $h(g,t)= s_t(g)$.  This is smooth, and if $\gamma$ is a
  smooth $k$-cycle in $G$, then $h_\sharp(\gamma\times [0,1])$ is a
  smooth $(k+1)$-chain which fills $\gamma$.  Furthermore, there is
  a $w:\R\to \R$ such that if $\gamma$ is supported in the ball of
  radius $r$ around $0$, then $h_\sharp(\gamma\times [0,1])$ is
  supported in the same ball and
  $$\mass h_\sharp(\gamma\times [0,1]) \le w(r) \mass \gamma.$$

  If $\gamma$ is a non-zero smooth cycle in $G$, let $g_\gamma\in
  G$ be a point in the support of $\gamma$.  Define
  $h':C^{\text{lip}}_*(G)\to C^{\text{lip}}_{*+1}(G)$ by letting
  $h'(0)=0$ and
  $$h'(\gamma)=g_\gamma\cdot h_\sharp(g_\gamma^{-1}\gamma\times
  [0,1])$$ for all $\gamma\ne 0$.  If $\gamma$ is a cycle and $\dim \gamma\ge 1$, then
  $\partial h'(\gamma)=\gamma$ and 
  $$\mass h'(\gamma) \le w(\diam \gamma)\mass \gamma,$$ where $\diam \gamma$ is the diameter of the
  support of $\gamma$.

  Let $C_*\subset C^{\text{sm}}_*(G)$ be the chain complex generated
  by smooth simplices which are transverse to the barycentric
  subdivision of $f$ (i.e., $f$ considered as a map $B(\tau)\to G$.)
  We will use $h'$ to construct a chain homotopy
  $Q_{f(\tau)}:C_*(G)\to C^{\text{sm}}_{*+1}(G)$ between $P_{f(\tau)}$
  and $\id$; that is, a linear map which satisfies the identity
  \begin{equation}\label{eq:chainHtpy}
    \partial Q_{f(\tau)}(\gamma)+Q_{f(\tau)}(\partial
    \gamma)=P_{f(\tau)}(\gamma)-\gamma
  \end{equation}
  for all $\gamma\in C_*$.  In particular, if $\gamma$ is a
  cycle, then $\partial
  Q_{{f(\tau)}}(\gamma)=P_{f(\tau)}(\gamma)-\gamma$.

  We define $Q_{{f(\tau)}}$ inductively.  The base case is to define $Q_{{f(\tau)}}$ on
  0-simplices; if $g\in G$, let $Q_{{f(\tau)}}(g)=h'(P_{f(\tau)}(g)-g)$; this
  satisfies \eqref{eq:chainHtpy}.  In general,
  if we have defined $Q_{{f(\tau)}}$ on simplices of dimension at most $k$ so
  that it satisfies \eqref{eq:chainHtpy} and $s$ is a
  $(k+1)$-simplex, then
  $$P_{f(\tau)}(s)-s-Q_{{f(\tau)}}(\partial s)$$
  is a $k$-cycle.  We define $Q_{{f(\tau)}}$ on $s$ by letting
  $$Q_{{f(\tau)}}(s)=h'(P_{f(\tau)}(s)-s-Q_{{f(\tau)}}(\partial s)).$$

  Define
  $$Q_{g\cdot f(\tau)}(a)=g\cdot Q_{f(\tau)}(g^{-1}\cdot a).$$
  If $\gamma$ is a cycle, then
  $$\partial  Q_{g\cdot f(\tau)}(\gamma)=P_{g\cdot f(\tau)}(\gamma)-\gamma.$$

  Let $D=\max_{\sigma\in \tau}\diam f(\sigma)$ be the maximum diameter
  of the image of a simplex of $\tau$ and let
  $$D^*=\max_{\sigma\in \tau} \diam f(\sigma^*)$$ 
  be the maximal diameter of a dual cell.  If $S\subset X$, let $N_r(S)$ be the
  neighborhood of $S$ of radius $r$.  We will show that for all $0\le
  i\le \dim G$ there are numbers $M_i, D_i>0$ such that if $\sigma$ is a
  smooth $i$-simplex whose diameter is at most $D$,
  then
  $$\supp Q_{f(\tau)}(\sigma)\subset N_{D_i}(\supp
  \sigma)$$
  if $\sigma$ is transverse to $f$ 
  and
  $$\int_{Z} \mass Q_{g\cdot f(\tau)}(\sigma) \; dx=\int_{Z} \mass
  Q_{f(\tau)}(g^{-1}\cdot \sigma) \; dx\le  M_i(\mass
  \sigma+\mass \partial\sigma).$$ 

  We proceed by induction.  First, consider the case that $\dim
  \sigma=0$.  In this case, $\sigma$ is supported on a point $x$, and
  if $\sigma$ is transverse to $f$, then the support of
  $P_{f(\tau)}(\sigma)$ is contained in a ball around $x$ of radius
  $D$.  Therefore $\diam P_{f(\tau)}(\sigma)-\sigma\le 2D$ and
  $\diam Q_{f(\tau)}(\sigma)\le 4D$.  For all but a measure zero
  subset of $G$, we have
  $$\mass Q_{f(\tau)}(g^{-1}\cdot\sigma)\le w(2D)\mass
  P_{f(\tau)}(g^{-1}\cdot\sigma).$$ By Lemma~\ref{lem:volBound}, there
  is a $M_0$ such that
  $$\int_Z \mass Q_{f(\tau)}(g^{-1}\cdot\sigma)\;dg \le M_0.$$

  Assume by induction that the mass and diameter bounds hold for
  simplices of dimension $< k$ and consider the case that $\sigma$ is
  a $k$-simplex  transverse to $f$ .  Then $Q_{f(\tau)}(\sigma)=h'(\rho(\sigma)),$ where
  $$\rho(\sigma)=P_{f(\tau)}(\sigma)-\sigma-Q_{{f(\tau)}}(\partial\sigma),$$
  so we consider $\supp P_{f(\tau)}(\sigma)$ and
  $\supp Q_{{f(\tau)}}(\partial\sigma)$.  By the definition of
  $D$, we have 
  $$\supp P_{f(\tau)}(\sigma)\subset N_D(\supp \sigma).$$  Each
  simplex of $\partial \sigma$ has diameter at most $D$, so by
  induction,
  $$\supp Q_{{f(\tau)}}(\partial\sigma)\subset N_{D_{k-1}}(\supp
  \sigma).$$
  The diameter of $\rho(\sigma)$ is thus at most $2 D_{k-1}+3 D$, so
  the
  diameter of $Q_{{f(\tau)}}(\sigma)=h'(\rho)$ is at most $D_k:=4 D_{k-1}+6
  D$, and 
  $$\supp Q_{{f(\tau)}}(\sigma)\subset N_{D_{k}}(\supp \sigma).$$

  Furthermore, we have
  $$\mass Q_{f(\tau)}(\sigma) \le w(D_k) \mass \rho(\sigma).$$
  By Lemma~\ref{lem:volBound} and the inductive hypothesis, there
  is a $c'$ such that
  $$\int_Z \mass \rho(g^{-1}\cdot\sigma)\;dg \le c'(\mass
  \sigma+\mass \partial\sigma),$$
  so if $M_k:=w(D_k)c'$, then 
  $$\int_Z \mass Q_{f(\tau)}(g^{-1}\cdot\sigma)\;dg \le M_k(\mass
  \sigma+\mass \partial\sigma)$$
  as desired.
\end{proof}
This completes the proof of Prop.~\ref{prop:PandQ}.

We can also construct chains which interpolate between two
different approximations of a cycle.  For example, if $\tau_1$ and $\tau_2$
are different smooth triangulations of $G$, we can construct a chain
interpolating between $P_{\tau_1}(\alpha)$ and $P_{\tau_2}(\alpha)$.
Let $\tau'$ be a smooth triangulation of $G\times [0,1]$ which
restricts to $\tau_1$ on $G\times \{0\}$ and to $\tau_2$ on $G\times
\{1\}$.  Consider the $(d+1)$-chain
$$H=P_{\tau'}(\alpha\times [0,1]).$$
Its boundary is
$$\partial H=P_{\tau'}(\alpha\times \{0\}-\alpha \times
\{1\})=P_{\tau_1}(\alpha)\times \{0\}-P_{\tau_2}(\alpha)\times
\{1\},$$ so the projection of $H$ to $G$ has boundary
$P_{\tau_1}(\alpha)-P_{\tau_2}(\alpha)$.  

Similarly, if $f_1:\tau\to G$ and $f_2:\tau\to G$ are homotopic by a homotopy
$h:\tau\times [0,1]\to G$, we can construct a $(d+1)$-chain connecting
$P_{f_1(\tau)}(\alpha)$ and $P_{f_2(\tau)}(\alpha)$ in a similar way.
We define
$$ H'=P_{h(\tau\times [0,1])}(\alpha\times [0,1])$$
and consider the projection of $H'$ to $G$; this has boundary 
$P_{f_1(\tau)}(\alpha)-P_{f_2(\tau)}(\alpha)$ as desired.

To bound the volume of constructions like this, we need a corollary of
Lemma~\ref{lem:intersect}:
\begin{cor}\label{cor:intervalIntersect}
  If $m+n=\dim G$, $a$ is an $m$-dimensional smooth chain
  in $G$, and $b$ is an $n$-dimensional smooth chain in
  $G\times [0,1]$, then $i(g\cdot a\times [0,1],b)$ is defined for all
  $g\in G$ except for a measure 0 subset, and
  $$\int_G |i(g\cdot a\times [0,1],b)|\;dg\le c_\cap \mass a \mass b.$$
\end{cor}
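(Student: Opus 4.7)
The plan is to reduce the corollary directly to Lemma~\ref{lem:intersect} by projecting $b$ from $G \times [0,1]$ down to $G$. As in that lemma, bilinearity of $i$ together with barycentric subdivision reduces us to the case where $a$ and $b$ are single smooth simplices $\alpha:\Delta^m\to G$ and $\beta:\Delta^n\to G\times[0,1]$ of diameter at most $1$, and the same unimodularity computation (translating $a$ by $h'$ and $b$ by $h$) lets us assume both lie in a fixed ball of bounded radius about the identity.

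Write $\beta=(\beta_G,\beta_t)$, where $\beta_G=\pi\circ\beta$ and $\pi:G\times[0,1]\to G$ is the projection. The geometric observation that drives the reduction is that a point $(z,s)$ lies in both $g\alpha(\Delta^m)\times[0,1]$ and $\beta(\Delta^n)$ iff there exist $x\in\Delta^m$, $y\in\Delta^n$ with $g\alpha(x)=\beta_G(y)$, and in that case $s=\beta_t(y)$ is forced. Thus intersection points of $g\cdot a\times[0,1]$ with $b$ in $G\times[0,1]$ biject onto elements of $\gamma^{-1}(g)$ for the map $\gamma:\Delta^m\times\Delta^n\to G$ given by $\gamma(x,y)=\beta_G(y)\alpha(x)^{-1}$---the very map that appears in the proof of Lemma~\ref{lem:intersect}, only with $\beta$ replaced by its projection $\beta_G$. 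A standard Sard/transversality argument in $G\times[0,1]$, using that translation by $g$ sweeps out the full $G$ factor, shows that for almost every $g$ the map $g\cdot\alpha\times\id_{[0,1]}$ is transverse to $\beta$; so $i(g\cdot a\times[0,1],b)$ is defined off a measure zero set and $|i(g\cdot a\times[0,1],b)|\le\#\gamma^{-1}(g)$.

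From here one repeats the tail of the proof of Lemma~\ref{lem:intersect} verbatim: the coarea inequality gives $\int_G\#\gamma^{-1}(g)\,dg\le\vol\gamma$, and factoring $\gamma=p\circ(\alpha,\beta_G)$ with $p(h,k)=kh^{-1}$ yields $\vol\gamma\le(\Lip p|_B)^{\dim G}\vol\alpha\cdot\vol\beta_G=c_\cap\vol\alpha\cdot\vol\beta_G$. Since $\pi$ is $1$-Lipschitz, $\vol\beta_G\le\vol\beta=\mass b$, giving the stated bound with the same constant $c_\cap$. The only conceptual step---and hence the main (mild) obstacle---is verifying that the time coordinate is always determined by the $G$-data, so that the product structure collapses cleanly and one really can read off intersections in $G\times[0,1]$ from the projected picture in $G$; once this is observed the corollary is essentially a restatement of the lemma.
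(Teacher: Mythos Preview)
Your proof is correct and follows essentially the same approach as the paper: reduce to single simplices, project $b$ to $G$ via $\pi$, observe that intersections of $g\cdot\alpha\times[0,1]$ with $\beta$ in $G\times[0,1]$ correspond exactly to intersections of $g\cdot\alpha$ with $\pi\circ\beta$ in $G$, and then invoke Lemma~\ref{lem:intersect} (using that $\pi$ is $1$-Lipschitz so $\vol(\pi\circ\beta)\le\vol\beta$). The paper's version is terser---it simply cites the lemma rather than re-running its proof---but the content is the same.
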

\begin{proof}
  As before, we can reduce to the case that $a$ and $b$ are single smooth
  simplices, given by maps $\alpha$ and $\beta$.  Let
  $p:G\times[0,1]\to G$ be the projection map and consider $p \circ
  \beta$.  Each transverse intersection between $g\cdot \alpha$ and
  $p\circ \beta$ corresponds to a transverse intersection between
  $g\cdot \alpha \times [0,1]$ and $\beta$, and vice versa, so
  $$\int_G |i(g\cdot \alpha\times [0,1],\beta)|\;dg =\int_G |i(g\cdot
    \alpha,p\circ \beta)|\;dg\le c_\cap \vol \alpha \vol \beta$$
\end{proof}
We can then bound the mass of $H$ and $H'$:
\begin{lemma}\label{lem:homBound}
  Let $\tau'$ be a $\Gamma$-adapted smooth triangulation of $G\times
  [0,1]$, and let $h:\tau'\to G\times [0,1]$ be a $\Gamma$-equivariant
  map which is piecewise smooth.  Let $d\in \Z$ and
  let $Z$ be a compact fundamental domain for the right action of $\Gamma$
  on $G$.  For all smooth $d$-cycles $\alpha$ in $G$,
  $$\int_Z \|P_{g\cdot h(\tau')}(\alpha \times [0,1])\|_1 \; dg \le c_\cap
  \mass{\alpha} \sum_{\Gamma\cdot{\sigma}\in  \Gamma \backslash (\tau')^{(d+1)}}
  \mass h_\sharp([\sigma^*]).$$
  Consequently, there is an $g\in G$ such that
  $$\|P_{g\cdot h(\tau')}(\alpha \times [0,1])\|_1 \le c_\cap \frac{\mass{\alpha}}{\vol_n Z} \sum_{\Gamma\cdot{\sigma} \in \Gamma
    \backslash (\tau')^{(d+1)}} \mass h_\sharp([\sigma^*]).$$
\end{lemma}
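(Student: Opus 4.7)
The lemma is a direct analogue of Lemma~\ref{lem:volBound} for the space $G\times[0,1]$, and I would prove it by the same argument, substituting Corollary~\ref{cor:intervalIntersect} for Lemma~\ref{lem:intersect}. The first step is to unfold the $\ell^1$-norm as
\[
\|P_{x\cdot f(\tau)}(\alpha\times[0,1])\|_1
=\sum_{\sigma\in\tau^{(d+1)}}
\bigl|i\bigl(\alpha\times[0,1],\,x\cdot f_\sharp(\widehat{\sigma^*})\bigr)\bigr|,
\]
integrate over $x\in Z$, and swap sum and integral.

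Next, I would partition $\tau^{(d+1)}$ into $\Gamma$-orbits. Since $\tau$ is $\Gamma$-adapted, the barycentric subdivision is $\Gamma$-equivariant, and since $f$ is $\Gamma$-equivariant, one has $f_\sharp(\widehat{(\gamma\cdot\sigma)^*})=\gamma\cdot f_\sharp(\widehat{\sigma^*})$ for each $\gamma\in\Gamma$. For a fixed orbit representative $\sigma$, combining the integral over $Z$ with the sum over $\gamma\in\Gamma$ unfolds, via $Z\Gamma=G$, into an integral of $|i(\alpha\times[0,1],\,g\cdot f_\sharp(\widehat{\sigma^*}))|$ over $g\in G$. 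Using the inverse-invariance of Haar measure on the unimodular group $G$ to move the translation from $f_\sharp(\widehat{\sigma^*})$ onto $\alpha$, Corollary~\ref{cor:intervalIntersect} then bounds each orbit's contribution by $c_\cap\mass{\alpha}\cdot\mass f_\sharp(\widehat{\sigma^*})$. Summing over orbits gives the integral inequality, and the pointwise statement follows by observing that the minimum of a nonnegative integrand over $Z$ is at most its mean, namely the integral divided by $\vol_n Z$.

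The only real bookkeeping to watch is dimensional: $\sigma$ ranges over $(d+1)$-cells of a triangulation of the $(\dim G+1)$-dimensional manifold $G\times[0,1]$, so the dual cell $\sigma^*$ has dimension $\dim G-d$, and then $f_\sharp(\widehat{\sigma^*})$ together with $\alpha$ exactly fit the hypotheses $m+n=\dim G$ of Corollary~\ref{cor:intervalIntersect}. Once this is verified, no new ideas beyond those of Lemma~\ref{lem:volBound} are required, and the main step that warrants care is confirming that the $\Gamma$-equivariance of $f$ propagates through the $(\cdot)^*$ construction so that the orbit unfolding is legitimate.
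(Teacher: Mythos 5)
Your proposal is correct and follows essentially the same route as the paper: expand the $\ell^1$-norm as a sum of intersection numbers, group the cells into $\Gamma$-orbits, unfold $\int_Z\sum_{\gamma\in\Gamma}$ into $\int_G$ using $Z\Gamma=G$, and apply Corollary~\ref{cor:intervalIntersect}. The paper's proof is a terse two-line inequality chain that implicitly leans on the orbit-unfolding and unimodularity steps already spelled out in the proof of Lemma~\ref{lem:volBound}; you have simply made those steps (including the dimension count for $\sigma^*$) explicit, which is exactly what is going on.
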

\begin{proof}
  Corollary~\ref{cor:intervalIntersect}
  implies that 
  \begin{align*}
    \int_Z \|P_{g\cdot h(\tau')}(\alpha\times [0,1])\|_1 \; dg &  \le \sum_{\Gamma\cdot{\sigma}\in  \Gamma \backslash (\tau')^{(d+1)}}
    \int_G |i(\alpha \times [0,1],g\cdot h_\sharp([\sigma^*]))| \; dg\\
    & \le  c_\cap\mass{\alpha} \sum_{\Gamma\cdot{\sigma}\in  \Gamma \backslash (\tau')^{(d+1)}}
    \mass h_\sharp([\sigma^*]),
  \end{align*}
  as desired.
\end{proof}

\section{Filling cycles in Carnot groups}\label{sec:mainthm}
Now we apply the approximation techniques of the previous
section to construct fillings of cycles in Carnot groups.  Let $G$ be
an $n$-dimensional Carnot group, let $s_t:G\to G, t>0$ be the family
of scaling automorphisms, and let $\Gamma$ be a lattice in $G$ such
that $s_{2^i}(\Gamma)\subset \Gamma$ for $i=1,2,\dots$.  

If $\alpha$ is a Lipschitz cycle and $f:\tau \to G$, we can construct
a sequence
$$P_{(s_{2^0}\circ f)(\tau)}(\alpha),\dots, P_{(s_{2^i}\circ f)(\tau)}(\alpha),\dots$$
of approximations of $\alpha$ of different scales.  Furthermore, using
the constructions in the previous section, we can construct chains
interpolating between these approximations.  If we can bound the mass of the
interpolating chains and show that $P_{(s_{2^i}\circ f)(\tau)}(\alpha)=0$ for some
$i$, we can bound the filling volume of $\alpha$.

In general, we cannot expect good bounds on the mass of
$P_{(s_{2^i}\circ f)(\tau)}(\alpha)$, but we can bound the expected
mass of $P_{g\cdot (s_{2^i}\circ f)(\tau)}(\alpha)$ when $g$ is a
random element of $G$.  The masses of the approximations and the
interpolating chains depend on the masses of simplices of $s_{2^i}\circ
f$, so this lets us find bounds on $\FV^*_G$ by constructing maps from
simplicial complexes to $G$ that satisfy certain metric properties.

We first state some definitions necessary to state the main
proposition of the section.  If $(\tau,\phi:\tau\to G)$ is a
triangulation, we define $s_t(\tau)$ to be the triangulation
$(\tau,s_t\circ \phi)$.  If $\tau$ is $\Gamma$-adapted, then
$s_t(\tau)$ is $s_t(\Gamma)$-adapted.  

If $S\subset G$ is an open
subset, then the scaling automorphisms polynomially expand the volume
of $S$.  That is, there is an integer $\kappa$ such that
$$\vol s_t(S)=t^{\kappa} \vol S.$$
We call $\kappa$ the {\em volume growth exponent} of $G$.  
Likewise, if $f:\tau\to G$, the scaling automorphisms polynomially
expand the volume of $f(\tau^{(d)}$ for all $d$.  If
$$\size_d(f)=\sum_{\Gamma\cdot{\sigma}\in  \Gamma \backslash B(\tau)^{(d)}} \mass
  f_\sharp(\sigma^*),$$
there are $k(f,d)$ such that
$$\size_{d}(s_t\circ f) \sim t^{k(f,d)}$$
for $t>1$.

% State and prove that approximations have the right volumes.

\begin{prop}\label{prop:mainProp}
  Let $\tau=(\tau,\phi)$ be a $\Gamma$-adapted triangulation of $G$
  and let $\tau'$ be an $s_2(\Gamma)$-adapted triangulation of
  $G\times [1,2]$ which restricts to $\tau$ on $G\times\{1\}$ and
  $s_2(\tau)$ on $G\times\{2\}$.
  Let $f:\tau\to G$ be a $s_2(\Gamma)$-equivariant piecewise smooth
  map $f:G\times [1,2]\cong \tau' \to G$ which is $\Gamma$-equivariant
  when restricted to $G\times \{1\}$ and satisfies
  $f(g,2)=s_2(f(s_{1/2}(g),1))$.

  Let $n=\dim G$, and for each $1\le d\le n$, let $k(d)=k(f,d)$.  Let
  $\kappa$ be the volume growth exponent of $G$.  Then for
  all $1\le d < n$, if $k(d+1)+k(n-d)>\kappa$, then
  $$\FV^d_G(V)\precsim  V^\frac{k(d+1)}{\kappa-k(n-d)}.$$
\end{prop}
\begin{proof} 
  It suffices to construct fillings for all simplicial $d$-cycles.  
  Let $\alpha\in C_d(\tau)$ be a simplicial $d$-cycle with mass $V$.
  Let $c_0$ be such that for all $t>1$ and all $1\le d'\le \dim G$,
  $$c_\cap \size_{d'}(s_t\circ f,\tau') \le c_0 t^{k(d')}.$$
  Let $\izero$ be the smallest positive integer such that
  $$2 c_0 V 2^{\izero k(n-d)} < 2^{\kappa \izero} \vol Z;$$ 
  note that $k(d')<\kappa$ for all $d'<n$, so this exists.
  We will construct a filling of $\alpha$ by constructing a
  triangulation $\tau_0$ of $G\times [1,2^{\izero}]$ and a map
  $f_0:G\times [1,2^\izero]\to G\times [1,2^\izero]$, then considering
  $$P_{f_0(\tau_0)}(\alpha\times [1,2^{\izero}]).$$

  We build $\tau_0$ out of scalings of $\tau'$.  The conditions
  on $\tau'$ mean that scaled copies of $\tau'$ can be glued together.
  That is, we extend $s_t$ to a map $s_t:G\times [0,\infty) \to
  G\times [0,\infty)$ given by $s_t(g,x)=(s_t(g), xt)$ and define
  $\tau'_i:=s_{2^{i-1}}(\tau')$.  For each $i$, $\tau'_i$ is a
  triangulation of $G\times [2^{i-1},2^{i}]$, and $\tau'_i$ restricts
  to $s_{2^{i-1}}(\tau)$ on $G\times \{2^{i-1}\}$ and to
  $s_{2^{i}}(\tau)$ on $G\times \{2^{i}\}$.  In particular, $\tau'_i$
  and $\tau'_{i+1}$ agree on $G\times \{2^{i}\}$, and we can glue
  $\tau'_1,\dots, \tau'_{\izero}$ to obtain a triangulation $\tau_0$
  of $G\times [1,2^{\izero}]$.

  The conditions on $f$ let us extend it to all of $\tau_0$.
  We define $f_0:G\times [1,2^{\izero}]\to G \times
  [1,2^{\izero}]$ by letting
  $$f_0(p)=s_{2^i}\circ f \circ s_{2^{-i}}(p)$$
  for all $p\in G\times [2^i,2^{i+1}]$.  Since $s_{2}\circ f \circ
  s_{2^{-1}}(p)=f(p)$ for all $p\in G\times \{2\}$, this is
  well-defined and continuous.

  Now, for $x\in G$, consider the chain
  $$\beta_0(x):=P_{x\cdot f_0(\tau_0)}(\alpha\times [1,2^{\izero}]).$$
  Let $\bar{f}:\tau \to G$ be the map $\bar{f}(y)=f(y,1)$.
  The boundary of $\beta_0(x)$ is then
  $$\partial\beta_0(x)=P_{x\cdot \bar{f}(\tau)}(\alpha) \times \{1\} -
  P_{x\cdot (s_{2^{\izero}}\circ \bar{f})(\tau)}(\alpha) \times \{2^{\izero}\}.$$
  This boundary has two pieces: an approximation of $\alpha$ in
  $x\cdot \bar{f}(\tau)$ and an approximation of $\alpha$ in $x\cdot (s_{2^{\izero}}\circ \bar{f})(\tau)$.  By Lemma~\ref{lemma:Q}, the first piece
  is usually close to $\alpha$.  We will show that the second term is typically zero and
  that there is a choice of $x$ which leads to a $\beta_0$ with low mass.

  First, we claim that the ``large-scale'' term 
  $$\gamma(x):=P_{x\cdot s_{2^{\izero}}(f(\tau))}(\alpha)$$
  of $\partial\beta_0(x)$ usually vanishes.  
  Let $Z$ be a fundamental domain for the right action of $\Gamma$ on
  $G$.
  By Lemma~\ref{lem:volBound},
  $$\int_{s_{2^{\izero}}(Z)} \|\gamma(x)\|_1 \; dx \le  c_0
  2^{\izero k(n-d)} V < \frac{2^{\kappa\izero}\vol Z}{2} = \frac{\vol s_{2^{\izero}}(Z)}{2},$$
  so there is a set $Z'\subset s_{2^{\izero}}(Z) $ of volume $\vol Z' > \vol s_{2^{\izero}}(Z)/2$ such that if
  $x\in Z'$, then 
  $$\|\gamma(x)\|_1 < 1.$$
  Since $\gamma(x)$ is an integral chain, this implies that if
  $x\in Z'$, then $\gamma(x)=0$.  Let $p:G\times [1,2^\izero]\to G$ be
  the projection to the first coordinate.  If we let
  $$\beta(x):=Q_{x\cdot f(\tau)}(\alpha)+p_\sharp(\beta_0(x)),$$
  then $\partial\beta(x)=\alpha$ when $x\in Z'$.

  We claim that an appropriate choice of $x\in Z'$ leads to a
  $\beta(x)$ with small mass.  Let $Z_i:=s_{2^i}(Z)$ and let
  $f_i:G\times [2^{i-1},2^i]\to G$ be the restriction of $f_0$ to
  $G\times [2^{i-1},2^i]$.  Recall that
  $s_{2^i}(\Gamma)\subset s_{2^j}(\Gamma)$ whenever $i>j$, so if
  $\lambda:G\to \R$ is $s_{2^j}(\Gamma)$-invariant, then 
  $$\int_{Z_i}\lambda(x)\;dx=[s_{2^i}(\Gamma):s_{2^j}(\Gamma)]\int_{Z_j}\lambda(x)\;dx=2^{(i-j)\kappa}\int_{Z_j}\lambda(x)\;dx.$$
  Consider
  $$\int_{Z_\izero} \mass{\beta(x)} \; dx.$$
  We can break $\beta(x)$ into a sum
  $$\beta(x)=Q_{x\cdot f(\tau)}(\alpha)+\sum_{i=1}^{\izero} p_\sharp(P_{x\cdot
    f_i(\tau'_i)}(\alpha\times[2^{i-1},2^{i}]))$$ and bound the
  average mass of each term.  First, 
  \begin{align*}
    \int_{Z_\izero} \mass{Q_{x\cdot f(\tau)}(\alpha)} \; dx
    &=2^{\kappa \izero}\int_{Z} \mass{Q_{x\cdot
        f(\tau)}(\alpha)}\; dx \\
    &\le 2^{\kappa \izero} c_Q'V.
  \end{align*}
  Next, for $1\le i\le \izero$,
  \begin{align*}
    \int_{Z_\izero} 
    &\mass{p_\sharp(P_{x\cdot f_i(\tau'_i)}(\alpha\times[2^{i-1},2^{i}]))}\; dx \\
    &=
    2^{(\izero-i)\kappa } \int_{Z_i}
    \mass{p_\sharp(P_{x\cdot f_i(\tau'_i)}(\alpha\times[2^{i-1},2^{i}]))}\; dx \\
    &\le 2^{(\izero-i)\kappa } \int_{Z_i}
    c_0 2^{(i-1) k(d+1)}
    \|P_{x\cdot
      f_i(\tau'_i)}(\alpha \times [2^{i-1},2^{i}])\|_1 \; dx \\
    & \le 2^{(\izero-i)\kappa } c_0 2^{(i-1) k(d+1)}
    c_0 2^{(i-1) k(n-d)}V
  \end{align*}
  Thus
  $$\mass\beta(x)\le  [2^{\kappa \izero} c_Q'+\sum_{i=1}^{\izero}c_0^2
  2^{(\izero-i) \kappa +(i-1)[k(d+1)+k(n-d)]}]V.$$
  Since $k(d+1)+k(n-d)>\kappa$, the sum is dominated by the $i=\izero$
  term, so there is a $C$ such that
  $$\int_{Z_\izero} \mass{\beta(x)} \; dx \le C 2^{(k(d+1)+k(n-d))\izero}V$$
  Consequently, there is an $x\in Z'$ such that 
  $$\mass{\beta(x)} \le \frac{2 C}{\vol{Z_\izero}}
  2^{(k(d+1)+k(n-d)-\kappa)\izero}V.$$ Fix such an $x$.  If $V$ is
  sufficiently large, then the
  definition of $\izero$ implies that
  $$\biggl(\frac{2 c_0 V}{\vol Z}\biggr)^{\frac{1}{\kappa-k(n-d)}} < 2^\izero \le 2
  \biggl(\frac{2 c_0 V}{\vol Z}\biggr)^{\frac{1}{\kappa-k(n-d)}},$$
  so there is a $C'$ such that
  $$\mass{\beta(x)} \le C'
  V^{1+\frac{k(d+1)+k(n-d)-\kappa}{\kappa-k(n-d)}}=C'
  V^{\frac{k(d+1)}{\kappa-k(n-d)}},$$
  as desired.
\end{proof}

It remains to show that triangulations and maps with the desired
properties exist when $G$ is one of the Heisenberg groups.  The
Heisenberg groups are closely connected to contact geometry.  The
distribution of horizontal planes in the Heisenberg group forms a
contact structure, so results on the flexibility of isotropic maps
(see \cite{GroPDR} or \cite{ElMi}), imply that $H_n$ has many
horizontal submanifolds of dimension $\le n$.  In particular, one can
show (see for instance \cite[Lem.\ 12]{YoungFING}) that
\begin{lemma}\label{lem:heisTri}
  If $H_n$ is the $(2n+1)$-dimensional Heisenberg group, $\Gamma$ is the
  $(2n+1)$-dimensional integral Heisenberg group, and $s_t:H_n\to H_n,
  t>0$ is the family of scaling automorphisms of $H_n$, then there is a
  $s_2(\Gamma)$-adapted triangulation $\tau'$ of $H_n\times
  [1,2]$ and a $s_2(\Gamma)$-equivariant piecewise smooth map $f:H_n\times [1,2]\cong \tau'\to H_n$
  such that
  \begin{itemize}
  \item $\tau'$ restricts to a triangulation $\tau$ on $H_n\times \{0\}$
    and $s_2(\tau)$ on $H_n\times \{1\}$.
  \item For all $g\in H_n$, we have $f(s_2(g),2)=s_2(f(g,1))$.
  \item If $\sigma$ is a $d$-simplex of $\tau'$ for some $d\le n$, then
    $f|_\sigma$ is horizontal.
  \end{itemize}
\end{lemma}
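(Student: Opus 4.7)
The plan is to first construct the time-$1$ triangulation $\tau$ together with the restriction $\bar f:=f(\cdot,1):\tau\to H$, and then to extend to a triangulation of $H\times[1,2]$ by interpolating between $\tau$ and $s_2(\tau)$. The geometric input is that the horizontal distribution $V_1\subset TH$ is a contact structure of rank $2n$ in a space of dimension $2n+1$, so isotropic ($=$ horizontal) submanifolds of $H$ exist up to dimension $n$ and satisfy Gromov's relative $h$-principle for isotropic immersions \cite{GroPDR,ElMi}.

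First I would build $\tau$ and $\bar f$. Starting from any $\Gamma$-equivariant smooth triangulation of $H$ coming from a triangulation of the quotient $\Gamma\backslash H$, I would apply the relative isotropic $h$-principle inductively on the skeleta, on a set of $\Gamma$-orbit representatives: having already arranged horizontality of $\bar f$ on the boundary of a $d$-cell with $d\le n$, the $h$-principle provides a horizontal extension to the interior. Explicit horizontal subvarieties such as the coordinate plane $\{y_1=\dots=y_n=z=0\}$ and its $\Gamma$-translates can serve as initial building blocks to start the induction. On cells of dimension $>n$ no horizontality is required and the extension is arbitrary piecewise smooth. Transporting the result $\Gamma$-equivariantly yields $\bar f:\tau\to H$, which need not be a homeomorphism since the required deformations may fold cells onto lower-dimensional horizontal pieces.

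Next I would construct $\tau'$ and extend $\bar f$ to $f:\tau'\to H$. A standard prismatic interpolation between two triangulations, applied equivariantly on orbit representatives, produces an $s_2(\Gamma)$-adapted triangulation $\tau'$ of $H\times[1,2]$ restricting to $\tau$ at height $1$ and to $s_2(\tau)$ at height $2$. Define $f$ on the two boundary slices to be $\bar f$ and $s_2\circ\bar f\circ s_2^{-1}$; this is $s_2(\Gamma)$-equivariant on the boundary and satisfies $f(s_2(g),2)=s_2(f(g,1))$ by construction. Extend $f$ to the interior by applying the $h$-principle on $d$-simplices of $\tau'$ with $d\le n$ (whose boundary data is already horizontal) and arbitrary piecewise smooth extensions on higher-dimensional cells, working on a fundamental domain and transporting by $s_2(\Gamma)$.

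The main obstacle is coordinating three simultaneous constraints: $s_2(\Gamma)$-equivariance, horizontality on every $d$-simplex of $\tau'$ with $d\le n$, and the scaling-compatibility condition linking the two boundary slices. The scaling compatibility is built in once $\bar f$ is fixed. Horizontality is supplied by the $h$-principle, whose hypothesis that $d\le n$ in a contact $(2n+1)$-manifold is precisely the bound appearing in the lemma. Equivariance is preserved by doing all extensions on a finite set of orbit representatives. The technical heart is therefore the verification that the relative isotropic $h$-principle applies on each low-dimensional simplex with the prescribed horizontal boundary data; once this is arranged on the low-dimensional skeleton of $\tau'$, the top-dimensional cells impose no further difficulty.
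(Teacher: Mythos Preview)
Your proposal is correct and follows essentially the same route as the paper. The paper's own proof is a one-paragraph citation: it invokes the construction carried out as Lemma~4 (and Lemma~12) of \cite{YoungFING}, which builds $\tau_0$ and $f_0$ exactly via the skeleton-by-skeleton relative $h$-principle for isotropic immersions that you outline, and then observes that since $f_0$ was in fact smooth on simplices of the barycentric subdivision $B(\tau_0)$, taking $\tau'=B(\tau_0)$ upgrades the Lipschitz map to a piecewise smooth one. Your plan reconstructs that argument directly, so the only difference is packaging: you build piecewise smoothness in from the start rather than recovering it by subdividing afterwards. The one place where your sketch is slightly loose is the phrase ``standard prismatic interpolation'': since $\tau$ and $s_2(\tau)$ are distinct $s_2(\Gamma)$-adapted triangulations of $H$, you need a common $s_2(\Gamma)$-equivariant refinement before the cylinder can be triangulated compatibly with both ends, but this is indeed standard on the compact quotient $s_2(\Gamma)\backslash H$.
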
%check reference to Lemma 12 
In Lemma~12 of \cite{YoungFING}, we constructed a triangulation
$\tau_0$ of $H_n\times [1,2]$ and a map $f_0:\tau_0\to H_n$ which satisfy
all the conditions above except for the smoothness condition on $f_0$;
we only required that $f_0$ be Lipschitz.  In fact, we constructed
$f_0$ to be smooth on simplices of $B(\tau_0)$; we can thus take
$f=f_0$ and $\tau'=B(\tau_0)$.

This lets us prove Theorem~\ref{thm:DehnThm}:
\begin{proof}[{Proof of Theorem~\ref{thm:DehnThm}}]
  Since $f$ is horizontal on all simplices of $B(\tau')$ below dimension
  $n$, we can let $k(d)=d$ when $d\le n$.  Otherwise, recall that we
  can write the
  Lie algebra $\mathfrak{h}$ of $H_n$ as
  $$\mathfrak{h}=\R^{2n}\otimes \R.$$
  The scaling automorphism $s_t$ acts by scaling by $t$ on the $\R^{2n}$
  component and by scaling by $t^2$ on the $\R$ component.  In
  particular, there is a $c$ such that if $\beta$ is a wedge of $d$
  vectors in $\mathfrak{h}$, then
  $$\|{s_t}_*(\beta)\|\le c t^{d+1} \|\beta\|,$$
  which is sharp, for instance, when $\beta$ is a nontrivial wedge of
  $d-1$ vectors of $\R^{2n}$ and a generator of $\R$.  Thus if $\sigma$
  is a $d$-dimensional simplex of $B(\tau)$ for $d>n$, then there is a
  $c$ such that
  $$\vol s_t(f(\sigma))\le c t^{d+1},$$
  and we can let  $k(d)=d+1$ when $d>n$.  Similarly, the volume growth
  exponent of $H_n$ is $\kappa=2n+2$.  

  Thus, when $d>n$, we have $k(d+1)+k(2n+1-d)>2n+2$, so
  Prop.~\ref{prop:mainProp} implies that
  $$\FV^{d+1}_{H_n}(V) \lesssim V^{\frac{d+2}{d+1}}.$$
\end{proof}
Burillo \cite{BurLow} proved the corresponding lower bound, so this
implies that
$$\FV^{d+1}_{H_n}(V) \sim V^{\frac{d+2}{d+1}}.$$

\def\cprime{$'$}

\end{document}